\theoremstyle{definition}
\newtheorem{theorem}{Theorem}           
\newtheorem{lemma}{Lemma}
\newtheorem{remark}{Remark}
\begin{document}

\title[Short title]{Global stability of a Leslie-Gower-type fractional order tritrophic food chain model}

% Short title is optional, it will appear in running heads.
% It is necessary only if the title is to long to be used in running heads

\author{Shuvojit Mondal, Nandadulal Bairagi and Gaston M. N'Guerekata}

\address{Shuvojit Mondal \\ Department of Mathematics\\ Rabindra Mahavidyalaya, Champadanga, Hooghly-712401\\ West Bengal, India\\
\email{shuvojitmondal91@gmail.com}}

\address{Nandadulal Bairagi \\ Centre for Mathematical Biology and Ecology\\ Department of Mathematics, Jadavpur University\\ Kolkata-700032, India\\
\email{nbairagi.math@jadavpuruniversity.in}}

\address{Gaston M. N'Guerekata \\ Department of Mathematics, Morgan State University\\ Baltimore, MD 21251, USA\\
\email{gaston.n'guerekata@morgan.edu}}

\CorrespondingAuthor{Gaston M. N'Guerekata}

%\dedicated{Dedicated to...}                    % Optional

\date{DD.MM.YYYY}                               % Please, write the date of submission

\keywords{ Fractional order differential equation; Ecological model; Local and Global stability; Bifurcation; Chaos; Periodic solution; etc}

\subjclass{34A08, 26A33, 34K37, 44AXX}
        % AMS-2010 subj class. The list can be found on http://www.ams.org/mathscinet/msc/msc2010.html

\thanks{Research of NB is supported by JU-RUSA 2.0} 
        % Optional. Only one command thanks is allowed, use \par inside text if you need multiple thanks.       

\begin{abstract}
       Recently, the dynamical behaviors of a fractional order three species food chain model was studied by Alidousti and Ghahfarokhi ({\it Nonlinear Dynamics, doi: org/10.1007/s11071-018-4663-6, 2018}). They proved both the local and global asymptotic stability of all equilibrium points except the interior one. This work extends their work and gives proof of both the local and global stability analysis of the interior equilibrium point. Numerical examples are also provided to substantiate the analytical findings.
\end{abstract}

\maketitle

%%%%% END OF TITLE PAGE %%%%%%%%%%%%%%%%%%%%%%%%%%%%%%%%%%%%%%%%%%%%%%

%%%%% BODY OF THE PAPER %%%%%%%%%%%%%%%%%%%%%%%%%%%%%%%%%%%%%%%%%%%%%%
% You should eventually delete (after reading) the rest of the text below %%

\section{Introduction}

       Fractional calculus is a generalization of classical differential and integral calculus of integer order to arbitrary order. The notion of fractional derivative was first introduced by Leibnitz in 1695 and subsequently developed by Liouville, Heaviside, Caputo, Riemann. along with many others \cite{SET93}. Initially, fractional order derivatives and fractional order differential equations were treated as a topic of interest of pure mathematicians \cite{TET15}, but later on it found its own way of application in different fields of science and engineering mainly for two reasons. First, fractional order derivatives not only depends on the local conditions but also on the previous history of the function \cite{BET17}. Therefore, fractional derivatives became an efficient tool where consideration of memory or hereditary properties of the function is essential to represent the system, e.g., in case of biological systems. Secondly, fractional derivatives has an additional degree of freedom over its integer order counterpart due to the additional parameter that represents its order, and therefore more suitable for those systems having higher order dynamics and complex nonlinear phenomena \cite{TB84,SET11}.
       In the last two decades, fractional order calculus has been extensively used in several branches of science \& engineering and the number is huge. For brevity, we here mention only some review papers and books  \cite{MET11,GET10,AET12,D11,M06}. Fractional order models have also been used to understand the dynamics of interacting populations \cite{Ahmed07,RET13,CY14,MET17, LET16, V15, HET15, MET17a}.
       
       %areas such as epidemiology, cell-biology, bio-medical research, signal processing, control theory mechanics and various field of research in science and engineering. In recent years, some researchers were observed that a fractional order system is more stable than its integer counterpart since the stability domain in the complex space of eigenvalues is larger than the corresponding domain for ordinary differential equations \cite{Ahmed07}. In recent years, researchers are paying more interest in studying different phenomena in science and engineering represented by fractional order differential equations \cite{JV07,TET09,GG03,MM07,D11}. Recently, some papers discussed about the approximate solution of nonlinear fractional differential population models \cite{CY14,DET09,DET11,MET17} and some others studied the qualitative behavior of nonlinear interactions of biological species \cite{LET16,V15,HET15,MET17a}. \\
       
       In recent past, Aziz-Alaoui \cite{Aziz02} studied the following three-dimension coupled nonlinear autonomous system of integer order differential equations to understand the underlying dynamics of food chain model:
       \begin{eqnarray}\label{Tritophic_model}\nonumber
       \frac{dX}{dT} & = & a_0 X - b_0 X^2 - \frac{v_0 XY}{d_0 + X},~~ X(0)\geq 0, \\
       \frac{dY}{dT}& = & -a_1 Y + \frac{v_1 XY}{d_1 + X} - \frac{v_2 YZ}{d_2 + Y},~~ Y(0)\geq 0, \\
       \frac{dZ}{dT}& = & c_3 Z^2 - \frac{v_3 Z^2}{d_3 + Y},~~ Z(0)\geq 0, \nonumber
       \end{eqnarray}
       where $X, Y, Z$ are, respectively, the densities of prey, intermediate predator and top predator at any instant of at time $T$. All parameters are non-zero positive. For description of the model and system parameters, readers are referred to \cite{Aziz02}. \\
       With the transformations
       $$X = \frac{a_0}{b_0}x,~ Y = \frac{a_0^2}{b_0 v_0}y,~ Z = \frac{a_0^3}{b_0 v_0 v_2}z,~ T = \frac{t}{a_0}$$
       and $$a = \frac{b_0 d_0}{a_0},~ b = \frac{a_1}{a_0},~ c = \frac{v_1}{a_0},~ d = \frac{d_2 v_0 b_0}{a_0^2},~ p = \frac{c_3 a_0^2}{b_0 v_0 v_2},~ q = \frac{v_3}{v_2},~ r = \frac{d_3 v_0 b_0}{a_0^2},$$ the system (\ref{Tritophic_model}) takes the simplified form
       \begin{eqnarray}\label{Tritophic_1_model}\nonumber
       \frac{dx}{dt} & = & x(1-x)- \frac{xy}{x + a},~ x(0) = x_0\geq 0, \\
       \frac{dy}{dt}& = & \frac{cxy}{x + a} -by -  \frac{yz}{y + d},~ y(0) = y_0\geq 0, \\
       \frac{dz}{dt}& = & pz^2 - \frac{qz^2}{y +r},~ z(0) = z_0\geq 0. \nonumber
       \end{eqnarray}
       This system admits three biologically feasible boundary equilibrium points and one interior equilibrium point. Local and global stability criteria of these boundary equilibrium points are given in \cite{Aziz02} except the interior equilibrium point. It is numerically shown there that the system exhibits chaos through period doubling bifurcation. Considering the fractional derivative in caputo sense, Alidousti and Ghahfarokhi \cite{Alidousti18} extended the work of Aziz-Alaoui \cite{Aziz02} and analyzed the following fractional order tri-trophic model:
       \begin{eqnarray}\label{Tritophic fractional order model}\nonumber
       \frac{d^mX}{dT^m} & = & a_0 X - b_0 X^2 - \frac{v_0 XY}{d_0 + X},~~ X(0)\geq 0, \\
       \frac{d^mY}{dT^m}& = & -a_1 Y + \frac{v_1 XY}{d_1 + X} - \frac{v_2 YZ}{d_2 + Y},~~ Y(0)\geq 0, \\
       \frac{d^mZ}{dT^m}& = & c_3 Z^2 - \frac{v_3 Z^2}{d_3 + Y},~~ Z(0)\geq 0, \nonumber
       \end{eqnarray}
       where $m \in (0,1)$ is the order of the derivative. With the same transformations as before, the system (\ref{Tritophic fractional order model}) takes the following simplified form :
       \begin{eqnarray}\label{Tritophic fractional order model_2}\nonumber
       ^{c}_{0} D^{m}_{t}x & = & x(1-x)- \frac{xy}{x + a},~ x(0)\geq 0, \\
       ^{c}_{0} D^{m}_{t}y & =  & \frac{cxy}{x + a} -by -  \frac{yz}{y + d},~ y(0)\geq 0, \\
       ^{c}_{0} D^{m}_{t}z & =  & pz^2 - \frac{qz^2}{y +r},~ z(0)\geq 0, \nonumber
       \end{eqnarray}
       where $^{c}_{0} D^{m}_{t}$ is the Caputo fractional derivative with fractional order $m$ $(0< m \leq1)$. They have shown that the solutions of system (\ref{Tritophic fractional order model_2}) are positively invariant and uniformly bounded in $R^3_+$ under some restrictions. Local and global stability of three boundary equilibrium points of system (\ref{Tritophic fractional order model_2}) were also proved. Stability analysis of the coexistence (or interior) equilibrium point, however, was omitted as in the case of integer order system. Their simulation results using realistic parameter values showed that the fractional order system (\ref{Tritophic fractional order model}) exhibits rich dynamics, like chaos, when the value of $m$ is close to $1 ~(m = 0.97)$, but exhibits regular oscillations $(\mbox{for} ~ m = 0.9)$, or even stable behavior $(\mbox{for} ~ m = 0.88)$ as the value of $m$ becomes smaller. We here extend the works of Alidousti and Ghahfarokhi \cite{Alidousti18} and Aziz-Alaoui \cite{Aziz02} by proving the local and global stability criteria of the interior equilibrium point for both the integer and fractional order systems. Simulation results are also given to validate the analytical results.

\section{Mathematical results}
Alidousti and Ghahfarokhi \cite{Alidousti18} proved the following results regarding positivity and boundedness of the solutions of system (\ref{Tritophic fractional order model_2}).

\begin{theorem}[]  
	\label{thefirstone}
	\textit{If}
\begin{equation}\label{cond_1}
c + \frac{c}{4b} +r < \frac{q}{p}
\end{equation}
\textit{ and $A$ be the set defined by
	$$A = \bigg\{(x,y,z) \in R^3_+ : 0\leq x \leq1, 0 \leq x+ \frac{y}{c} \leq 1 + \frac{1}{4b}, 0 \leq x + \frac{y}{c} + \alpha z \leq 1+ \frac{1}{4b} + \frac{M}{b} \bigg\},$$\\
	where $$\alpha = \frac{1}{b^2 (c +\frac{c}{4b} +r)}, ~M = \frac{1}{4(q-(c +\frac{c}{4b} +r)p)},$$ then \\
	$(i)$ $A$ is positively invariant, \\
	$(ii)$ all non negative solutions of system (\ref{Tritophic fractional order model_2}) initiating in $R^3_+$ are uniformly bounded in time and they enter the attracting set $A$.}\\
\end{theorem}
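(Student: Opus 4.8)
The plan is to peel off the three ``box'' inequalities defining $A$ one at a time, each time invoking the comparison principle for Caputo equations in its linear form: if ${}^{c}_{0}D^{m}_{t}w \le -\lambda w + \mu$ with $\lambda,\mu>0$, then $w(t)\le \bigl(w(0)-\tfrac{\mu}{\lambda}\bigr)E_m(-\lambda t^m)+\tfrac{\mu}{\lambda}$, so $w(t)\le\max\{w(0),\mu/\lambda\}$ for all $t$ and $w(t)\to\mu/\lambda$. Positivity comes first: writing the three equations as ${}^{c}_{0}D^{m}_{t}x = x\,g_1$, ${}^{c}_{0}D^{m}_{t}y = y\,g_2$, ${}^{c}_{0}D^{m}_{t}z = z\,g_3$ with $g_1,g_2,g_3$ continuous and bounded along any solution (the denominators $x+a$, $y+d$, $y+r$ are bounded away from $0$ on $R^3_+$), the scalar positivity lemma for Caputo equations gives invariance of $R^3_+$, so all quantities below are nonnegative.

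First I would bound $x$: since $x,y\ge0$ the $x$-equation gives ${}^{c}_{0}D^{m}_{t}x\le x(1-x)$, and comparison with the scalar fractional logistic equation (for which $[0,1]$ is invariant and the $\limsup$ of every nonnegative solution is $\le1$) yields $x(t)\le1$ when $x(0)\le1$ and $\limsup_{t\to\infty}x(t)\le1$ otherwise. Then, for $V_1:=x+\tfrac{y}{c}$ the predation term $\pm\tfrac{xy}{x+a}$ cancels, leaving ${}^{c}_{0}D^{m}_{t}V_1 = x(1-x)-\tfrac{b}{c}y-\tfrac{yz}{c(y+d)}\le x(1-x)-\tfrac{b}{c}y$, hence ${}^{c}_{0}D^{m}_{t}V_1+bV_1\le x(1-x)+bx\le\tfrac14+b$, where the last bound uses $x(1-x)\le\tfrac14$ together with $x\le1$ from the previous step. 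The linear comparison (with $\mu/\lambda=1+\tfrac1{4b}$) closes the second box and, since $x\ge0$, also gives $y\le c+\tfrac{c}{4b}$.

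For the third box I would take $V_2:=x+\tfrac{y}{c}+\alpha z$, so that ${}^{c}_{0}D^{m}_{t}V_2+bV_2\le\bigl(\tfrac14+b\bigr)+\alpha z^2\bigl(p-\tfrac{q}{y+r}\bigr)+\alpha b z$. This is where hypothesis (\ref{cond_1}) enters: from $y\le c+\tfrac{c}{4b}$ we get $y+r\le c+\tfrac{c}{4b}+r$, and (\ref{cond_1}) says precisely that this is $<q/p$, i.e. $\delta:=q-\bigl(c+\tfrac{c}{4b}+r\bigr)p>0$, whence $p-\tfrac{q}{y+r}\le-\delta/\bigl(c+\tfrac{c}{4b}+r\bigr)<0$. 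The $z$-terms are then dominated by a downward parabola in $z$, and completing the square shows its maximum equals $\alpha b^2\bigl(c+\tfrac{c}{4b}+r\bigr)/(4\delta)$, which by the stated value $\alpha=\bigl[b^2\bigl(c+\tfrac{c}{4b}+r\bigr)\bigr]^{-1}$ is exactly $M=1/(4\delta)$. Thus ${}^{c}_{0}D^{m}_{t}V_2+bV_2\le\tfrac14+b+M$, and the linear comparison (now $\mu/\lambda=1+\tfrac1{4b}+\tfrac{M}{b}$) closes the third box.

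I expect the only real care needed is the \emph{ordering}: each comparison uses the bound obtained at the previous step ($x\le1$ feeds the $V_1$ estimate, $V_1\le1+\tfrac1{4b}$ feeds the $V_2$ estimate), so the absorption into $A$ in part (ii) is nested and must be run on successively later time windows, while part (i) follows by simply noting that if the initial point already satisfies all three inequalities then each comparison keeps $w(t)\le\max\{w(0),\mu/\lambda\}=\mu/\lambda$ for all $t$, together with positivity. What is left is routine: the cancellation in $V_1$, completing the square in $z$, and checking that $\alpha$ and $M$ are tuned so that the parabola's peak is exactly $M$.
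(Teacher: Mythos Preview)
The paper does not give its own proof of this theorem: it is quoted verbatim as a result of Alidousti and Ghahfarokhi \cite{Alidousti18}, who in turn carry over Aziz-Alaoui's integer-order argument \cite{Aziz02} to the Caputo setting. Your proposal reproduces exactly that standard route---positivity first, then nested linear comparison on $x$, on $V_1=x+y/c$, and on $V_2=V_1+\alpha z$, with condition~(\ref{cond_1}) entering precisely to force the $z$-parabola downward and the value of $\alpha$ tuned so its peak is $M$---so there is nothing substantive to contrast. One cosmetic point: in your positivity step the factor $g_3=z\bigl(p-\tfrac{q}{y+r}\bigr)$ is not \emph{a priori} bounded along a solution (that is what you are about to prove), so phrase that appeal as a local invariance/positivity lemma for Caputo systems rather than as boundedness of $g_3$.
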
       
       
\subsection{Existence and stability of equilibria}
\noindent The system (\ref{Tritophic fractional order model_2}) has four biologically feasible equilibrium points. The trivial equilibrium $E_{0} = (0, 0, 0)$ and the axial equilibrium  $E_{1} = (1,0,0)$ always exist. The planner equilibrium point $E_2 = (\theta, (1-\theta)(a +\theta),0)$ exists if $\theta<1$, where $\theta = \frac{ab}{c-b}$; or in other word $c> b(1+a)$. There exists a unique interior equilibrium point $E^* = (x^*, y^*, z^*) $ of the system (\ref{Tritophic fractional order model_2}), where the equilibrium population densities are given by
\begin{eqnarray}\label{Equilibrium relation}
x^{*} & = & \frac{(1-a)}{2} + \sqrt{\bigg(\frac{1+a}{2}\bigg)^2 - y^*}, ~~~y^{*} = \frac{q}{p} - r, ~z^* = (-b + \frac{cx^*}{a +x^*})(y^* + d).
\end{eqnarray}
The positivity condition of $E^*$ are $$a_0 > max\{b_0 d_0,~ 2\sqrt{b_0 v_0\bigg(\frac{v_3}{c_3} - d_3\bigg)} - b_0 d_0,~  \frac{b_0 d_0 a_1}{(v_1 - a_1)} + \frac{v_0}{d_0 v_1} \bigg(\frac{v_3}{c_3} -d_3\bigg)(v_1 - a_1) \},$$ where $v_3 > d_3 c_3$ and $v_1 > a_1$. Local and global stability results for the equilibrium points $E_0, E_1$ and $E_2$ are given in \cite{Alidousti18}. In the following, we give local and global stability results of $E^*$ only.

\section{\textbf{Main results}}

\noindent For local stability of the interior equilibrium $E^*$, we compute the Jacobian matrix of system (\ref{Tritophic fractional order model_2}) at $E^* = (x^*, y^*, z^*)$ as
\begin{equation}
\mathbf{J(E^{*})} = \begin{pmatrix}
\frac{x^*}{a + x^*} (1 - a - 2x^*) & \frac{-x^*}{a + x^*} & 0 \\ \frac{ac (1 - x^*)}{a + x^*} & \frac{y^*z^*}{(y^* + d)^2} & \frac{-y^*}{y^* +d}  \\ 0 & \frac{p(z^*)^2}{y^* + r} & 0
\end{pmatrix}.
\end{equation}
The eigenvalues are the roots of the cubic equation
\begin{equation}\label{cubic equation}
F(\xi) =  0,
\end{equation}
where $F(\xi) = \xi^{3} + A_{1} \xi^{2} + A_{2} \xi + A_{3}$,\\
$
A_{1} = \frac{x^*}{a + x^*} (2x^* - a - 1) - \frac{y^*z^*}{(y^* + d)^2},~ \\
A_{2} = \frac{py^*(z^*)^2}{(y^* + r)(y^* + d)} + \frac{x^* y^* z^*}{(a + x^*)(y^* + d)^2} (1 - a - 2x^*) +  \frac{acx^* (1 - x^*)}{(a + x^*)^2},\\
A_{3} = \frac{x^*}{a + x^*} (2x^* - a - 1)\frac{py^*(z^*)^2}{(y^* + r)(y^* + d)}.$\\

\noindent The equilibrium $E^*$ is said to be locally asymptotically stable if all eigenvalues of (\ref{cubic equation}) satisfy  $\mid arg(\xi_{i})\mid >\frac{m \pi}{2}, \forall m \in (0,1]$, $i = 1,2,3$. One can then determine the stability of $E^*$ by noting the signs of the coefficients $A_i$ and discriminant $D(F)$ of the cubic polynomial $F(\xi)$ \cite{Ahmed07,Ahmed06}. The discriminant $D(F)$ of the cubic polynomial $F(\xi)$ is
\[
\mathbf{D(F)} = - \begin{vmatrix}
1 & A_{1} & A_{2} & A_{3} & 0 \\ 0 & 1 & A_{1} & A_{2} & A_{3} \\ 3 & 2A_{1} & A_{2} & 0 & 0 \\ 0 & 3 & 2A_{1} & A_{2} & 0 \\ 0 & 0 & 3 & 2A_{1} & A_{2}
\end{vmatrix}= 18A_{1}A_{2}A_{3} + (A_{1}A_{2})^2 - 4A_{3}A_{1}^{3} - 4A_{2}^{3} - 27A_{3}^{2}.
\]
Then the following theorem regarding local asymptotic stability of $E^*$ of the system (\ref{Tritophic fractional order model_2}) is true \cite{Ahmed07,Ahmed06,Bairagi17}.\\

\begin{theorem}
	\label{thesecondone}
	\begin{itemize}
	\item[(i)] If $D(F) > 0$,  $ A_{1}>0$, $A_{3}>0$ and $A_{1}A_{2}- A_{3}>0$ then the interior equilibrium $E^{*}$ is locally asymptotically stable for all $m \in (0, 1]$.
	
	\item[(ii)] If $D(F) < 0$, $A_{1}\geq 0$, $A_{2}\geq 0$, $A_{3} > 0$ and $0< m < \frac{2}{3}$ then the interior equilibrium $E^{*}$ is locally asymptotically stable.
	
	\item[(iii)] If $D(F) < 0$, $A_{1} < 0$, $A_{2} < 0$ and $m > \frac{2}{3}$ then the interior equilibrium $E^{*}$ is unstable.
	\item[(iv)] If $D(F) < 0$, $A_{1} > 0$, $A_{2} > 0$, $A_{1}A_{2} = A_{3}$ and $0< m < 1$ then the interior equilibrium $E^{*}$ is locally asymptotically stable.
    \end{itemize}
\end{theorem}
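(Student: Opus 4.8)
The plan is to reduce everything to Matignon's criterion for Caputo fractional order systems, already invoked in the statement: $E^{*}$ is locally asymptotically stable iff every root $\xi_i$ of the characteristic cubic $F(\xi)=\xi^{3}+A_1\xi^{2}+A_2\xi+A_3$ obeys $|\arg(\xi_i)|>\frac{m\pi}{2}$, and $E^{*}$ is unstable as soon as some root obeys $|\arg(\xi_i)|<\frac{m\pi}{2}$. Because the $A_i$ are real, the sign of the discriminant $D(F)$ pins down the root pattern: $D(F)>0$ yields three distinct real roots, while $D(F)<0$ yields one real root $\xi_1$ together with a complex conjugate pair $\xi_{2,3}=\rho\,e^{\pm i\phi}$ with $\rho>0$, $0<\phi<\pi$. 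I would first record the Vieta relations, in the form $\xi_1\rho^{2}=-A_3$, $\xi_1+2\rho\cos\phi=-A_1$ and $\rho^{2}+2\xi_1\rho\cos\phi=A_2$: in every case $|\arg(\xi_1)|\in\{0,\pi\}$ with $\operatorname{sign}(\xi_1)=-\operatorname{sign}(A_3)$, so only the complex argument $\phi$ has to be controlled, and those three identities are precisely what tie $\phi$ to the coefficients.

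Parts (i) and (iv) are then short. When $D(F)>0$ all roots are real, so local asymptotic stability for every $m\in(0,1]$ is equivalent to all three roots being negative (arguments all equal to $\pi$), which by the classical Routh--Hurwitz test for a monic cubic is exactly $A_1>0$, $A_3>0$, $A_1A_2-A_3>0$; this is (i). For (iv), the relation $A_1A_2=A_3$ together with $A_1,A_2,A_3>0$ forces $F(\pm i\sqrt{A_2})=0$, so the complex pair is purely imaginary and $\phi=\frac{\pi}{2}>\frac{m\pi}{2}$ for all $m<1$, while $A_3>0$ gives $\xi_1=-A_3/A_2<0$; hence local asymptotic stability on $(0,1)$.

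The heart of the argument, and the step I expect to be the main obstacle, is bounding $\phi$ in cases (ii) and (iii); the estimates are elementary but must be chained in the right order. Put $\sigma=\rho\cos\phi=\operatorname{Re}(\xi_{2,3})$. In (ii), $A_3>0$ gives $\xi_1<0$, so $|\arg(\xi_1)|=\pi$; if $\sigma\le 0$ then $\phi\ge\frac{\pi}{2}>\frac{m\pi}{2}$ and we are done, whereas if $\sigma>0$ then $A_1\ge 0$ with $\xi_1+2\sigma=-A_1$ gives $2\sigma\le|\xi_1|$, after which $A_2\ge 0$ with $\rho^{2}+2\xi_1\sigma=A_2$ gives $\rho^{2}\ge 2|\xi_1|\sigma\ge 4\sigma^{2}$, so $\cos\phi=\sigma/\rho\le\frac{1}{2}$, i.e.\ $\phi\ge\frac{\pi}{3}>\frac{m\pi}{2}$ because $m<\frac{2}{3}$; all three eigenvalues then satisfy Matignon. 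Case (iii) is the mirror image: if $A_3<0$ then $\xi_1>0$ and $E^{*}$ is already unstable; if $A_3>0$ then $\xi_1<0$, and $A_2<0$ forces $\sigma$ and $\xi_1$ to have opposite signs, so $\sigma>0$; then $A_1<0$ gives $2\sigma>|\xi_1|$ and $A_2<0$ gives $\rho^{2}<2|\xi_1|\sigma<4\sigma^{2}$, whence $\cos\phi>\frac{1}{2}$, i.e.\ $\phi<\frac{\pi}{3}<\frac{m\pi}{2}$ because $m>\frac{2}{3}$, so the complex eigenvalues violate Matignon and $E^{*}$ is unstable. The only genuinely delicate bookkeeping is matching the strict versus non-strict inequalities on $A_1,A_2$ to the statement and observing that the threshold $\frac{2}{3}$ enters solely through $\frac{\pi}{3}=\frac{2}{3}\cdot\frac{\pi}{2}$; I would also note in passing that this whole classification is the fractional Routh--Hurwitz scheme of \cite{Ahmed07,Ahmed06,Bairagi17} specialized to $F(\xi)$, so one may alternatively just cite it.
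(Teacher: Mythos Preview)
Your argument is correct, and in fact goes well beyond what the paper does. The paper does not give a proof of this theorem at all: it simply records the coefficients $A_1,A_2,A_3$, writes down the discriminant $D(F)$, and then states the four-part conclusion as a direct application of the fractional Routh--Hurwitz conditions of \cite{Ahmed07,Ahmed06,Bairagi17}. In other words, the paper's ``proof'' is exactly the citation you mention in your last sentence.

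What you have supplied is the underlying justification for those cited conditions, specialized to a monic cubic: you use the sign of $D(F)$ to fix the root pattern, Vieta's relations to pin down the real root and the real part $\sigma$ and modulus $\rho$ of the complex pair, and then elementary inequalities to trap $\cos\phi$ on the correct side of $\tfrac12$, which is precisely what produces the $m=\tfrac23$ threshold via $\tfrac{\pi}{3}=\tfrac{2}{3}\cdot\tfrac{\pi}{2}$. The chain of inequalities in (ii) and (iii) is clean and correct; the only small point you might add for completeness is that in case (iii) the possibility $A_3=0$ cannot occur, since $\xi_1=0$ together with $D(F)<0$ would force $A_2=\rho^{2}>0$, contradicting $A_2<0$. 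Your treatment of (i) and (iv) is standard and accurate.

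So both routes reach the same place; the paper outsources the work to the literature, while you reconstruct the relevant piece of that literature from Matignon's criterion. Your version is self-contained and makes transparent why the number $\tfrac23$ appears, which the bare citation does not.
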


\noindent To prove the global stability of $E^*$, we use the following Lemma \cite{V15} . \\

\begin{lemma}
	\label{one}
	 \textit{Let $x(t)\in \Re_{+}$ be a continuous and derivable function. Then for any time instant $t>t_{0}$
	\begin{equation}\nonumber
	^{c}_{t_{0}} D^{m}_{t}\bigg[x(t) - x^{*} - x^{*}ln\frac{x(t)}{x^{*}}\bigg] \leq \bigg(1-\frac{x^{*}}{x(t)}\bigg)~~{^{c}_{t_{0}}} D^{m}_{t}x(t), x^{*}\in \Re_{+}, \forall m\in(0,1].
	\end{equation}}
\end{lemma}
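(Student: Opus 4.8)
The plan is to work directly from the Caputo integral representation
\[
{^{c}_{t_{0}}} D^{m}_{t} f(t) \;=\; \frac{1}{\Gamma(1-m)}\int_{t_{0}}^{t}(t-\tau)^{-m}\, f'(\tau)\, d\tau ,\qquad m\in(0,1),
\]
and to estimate the gap between the two sides of the asserted inequality. Set $h(t) = x(t) - x^{*} - x^{*}\ln\frac{x(t)}{x^{*}}$, so that $h'(\tau) = \bigl(1 - \tfrac{x^{*}}{x(\tau)}\bigr)x'(\tau)$. Forming $\bigl(1-\tfrac{x^{*}}{x(t)}\bigr){^{c}_{t_{0}}}D^{m}_{t} x(t) - {^{c}_{t_{0}}}D^{m}_{t} h(t)$ and combining the two integrands, this difference equals $\dfrac{x^{*}}{\Gamma(1-m)}\,I$ with
\[
I \;:=\; \int_{t_{0}}^{t}(t-\tau)^{-m}\, x'(\tau)\Bigl(\frac{1}{x(\tau)}-\frac{1}{x(t)}\Bigr)\, d\tau ,
\]
so it is enough to show $I\ge 0$. (For $m=1$ one simply has $I=0$, both sides collapsing to $\bigl(1-\tfrac{x^{*}}{x(t)}\bigr)x'(t)$, so equality holds there.)

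The crucial observation is that, with $t$ frozen, the integrand of $I$ is an exact $\tau$-derivative: introducing $\Phi(\tau):=\ln x(\tau) - \frac{x(\tau)}{x(t)}$ one checks $\Phi'(\tau) = x'(\tau)\bigl(\tfrac{1}{x(\tau)}-\tfrac{1}{x(t)}\bigr)$, hence $I = \int_{t_{0}}^{t}(t-\tau)^{-m}\Phi'(\tau)\, d\tau$. I would then integrate by parts, choosing the antiderivative of $\Phi'$ to be $\Phi(\tau)-\Phi(t)$ so that the boundary contribution at the endpoint $\tau=t$ is $(t-\tau)^{-m}\bigl(\Phi(\tau)-\Phi(t)\bigr)$; since $\Phi$ is differentiable at $t$ we have $\Phi(\tau)-\Phi(t)=O(t-\tau)$, and as $m<1$ this term tends to $0$ as $\tau\to t^{-}$. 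The outcome is
\[
I \;=\; (t-t_{0})^{-m}\bigl(\Phi(t)-\Phi(t_{0})\bigr) \;+\; m\int_{t_{0}}^{t}(t-\tau)^{-m-1}\bigl(\Phi(t)-\Phi(\tau)\bigr)\, d\tau .
\]

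It remains to pin down the sign of $\Phi(t)-\Phi(\sigma)$ for $\sigma\in\{t_{0}\}\cup[t_{0},t]$: a direct computation gives $\Phi(t)-\Phi(\sigma) = \dfrac{x(\sigma)}{x(t)} - 1 - \ln\dfrac{x(\sigma)}{x(t)}$, which is nonnegative by the elementary convex estimate $\ln s\le s-1$ ($s>0$), with equality only when $x(\sigma)=x(t)$. Since $(t-t_{0})^{-m}$, $(t-\tau)^{-m-1}$ and $m$ are all positive on $[t_{0},t)$, both terms in the displayed formula for $I$ are nonnegative, whence $I\ge 0$ and the lemma follows. The only genuinely delicate points are technical: the integration by parts is carried out across the weak singularity at $\tau=t$, so one must justify that the boundary term vanishes and that the arising $(t-\tau)^{-m-1}$ integral converges — both consequences of the differentiability of $x$ together with $x(t)>0$. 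The real content of the argument is the choice of the potential $\Phi$ and the fact that it manufactures precisely the Volterra function $s-1-\ln s$, the same building block underlying the Lyapunov functional $x-x^{*}-x^{*}\ln(x/x^{*})$.
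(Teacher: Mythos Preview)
The paper does not prove this lemma; it merely quotes it from Vargas-De-Le\'on \cite{V15} and then applies it in the proof of Theorem~\ref{thethirdone}. Your argument is correct and is, in fact, essentially the proof given in that reference: form the difference of the two sides, express the resulting integrand as $\Phi'(\tau)$ for the auxiliary potential $\Phi(\tau)=\ln x(\tau)-x(\tau)/x(t)$, integrate by parts against the kernel $(t-\tau)^{-m}$, and conclude nonnegativity from the elementary convex inequality $s-1-\ln s\ge 0$. The two technical points you flag---the vanishing of the boundary term at $\tau=t$ and the integrability of $(t-\tau)^{-m-1}\bigl(\Phi(t)-\Phi(\tau)\bigr)$---are indeed secured by $\Phi(t)-\Phi(\tau)=O(t-\tau)$ (in fact $O((t-\tau)^{2})$, since $\Phi'(t)=0$) together with $0<m<1$.
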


\begin{theorem}
	\label{thethirdone} \textit{The interior equilibrium $E^* = (x^*, y^*, z^*)$ of system (\ref{Tritophic fractional order model_2}) is globally asymptotically stable for any $m\in (0, 1]$ if} 
\begin{eqnarray}\label{cond_2} \nonumber
(i)&\frac{y^*}{a(a + x^*)} - 1 <0, \nonumber \\
(ii)&\frac{a +x^*}{ac}\bigg(\frac{z^*}{d(d + y^*)} - \frac{1}{2(c + \frac{c}{4b} + d)}\bigg) +   \frac{q}{2br\alpha} < 0, \nonumber \\
(iii)&\frac{q}{br\alpha} - \frac{a + x^*}{ac(c + \frac{c}{4b} + d)} < 0,\nonumber
\end{eqnarray}
where $$\alpha = \frac{1}{b^2 (c +\frac{c}{4b} +r)}>0.$$\\
\end{theorem}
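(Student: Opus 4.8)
The plan is to construct a Lyapunov functional of the Volterra type adapted to the fractional setting and to use Lemma~\ref{one} together with the boundedness estimates from Theorem~\ref{thefirstone}. Specifically, I would set
\[
V(x,y,z) = \Bigl(x - x^* - x^* \ln\frac{x}{x^*}\Bigr) + \gamma_1 \Bigl(y - y^* - y^* \ln\frac{y}{y^*}\Bigr) + \gamma_2 \Bigl(z - z^* - z^* \ln\frac{z}{z^*}\Bigr),
\]
with positive weights $\gamma_1,\gamma_2$ to be pinned down later (I expect $\gamma_1 = 1/c$ and $\gamma_2 = \alpha$, mirroring the combination $x + y/c + \alpha z$ appearing in the invariant set $A$). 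Since $V$ is a sum of the Volterra functions, each being non-negative and vanishing only at $E^*$, $V$ is a legitimate Lyapunov candidate on the interior of $R^3_+$.

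Next I would apply Lemma~\ref{one} termwise to bound the Caputo derivative of $V$ along solutions:
\[
{}^{c}_{0}D^{m}_{t} V \le \Bigl(1 - \frac{x^*}{x}\Bigr){}^{c}_{0}D^{m}_{t}x + \gamma_1\Bigl(1 - \frac{y^*}{y}\Bigr){}^{c}_{0}D^{m}_{t}y + \gamma_2\Bigl(1 - \frac{z^*}{z}\Bigr){}^{c}_{0}D^{m}_{t}z.
\]
Then I would substitute the right-hand sides of system (\ref{Tritophic fractional order model_2}), use the equilibrium identities (\ref{Equilibrium relation}) to replace the constants $1$, $b$, $q/p$ by their expressions in $x^*,y^*,z^*$, and rewrite everything in terms of the deviations $x-x^*$, $y-y^*$, $z-z^*$. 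The goal is to reach an estimate of the form ${}^{c}_{0}D^{m}_{t} V \le -(x-x^*)^{\mathsf T} Q\, (x-x^*)$ for some symmetric matrix $Q$ (after bounding the denominators $x+a$, $y+d$, $y+r$ using $A$), so that negative definiteness of $Q$ — which is exactly what conditions $(i)$–$(iii)$ encode via a sign/leading-minor argument — gives ${}^{c}_{0}D^{m}_{t} V \le 0$, with equality only at $E^*$. A fractional LaSalle-type invariance principle (or the standard Lyapunov theorem for fractional systems) then yields global asymptotic stability for all $m \in (0,1]$.

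The main obstacle will be the algebra of the cross terms. The predator equations contribute Holling-type fractions $\frac{xy}{x+a}$ and $\frac{yz}{y+d}$ whose differences with their equilibrium values do not split cleanly into single-sign quadratics; I will need to add and subtract $x^*$ (resp. $y^*$) inside the denominators and use bounds like $x+a \ge a$, $x+a \le 1+a$, $y+d \ge d$, $y+r \ge r$ coming from the attracting set $A$ to dominate the indefinite pieces. Choosing $\gamma_1,\gamma_2$ so that the genuinely sign-indefinite cross terms either cancel or are absorbed is the delicate point, and it is precisely here that the somewhat unusual-looking combinations $\frac{a+x^*}{ac}$, $\frac{q}{br\alpha}$, and the factor $\frac{1}{2(c+\frac{c}{4b}+d)}$ in hypotheses $(i)$–$(iii)$ should emerge. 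Once the quadratic form is assembled, verifying that $(i)$–$(iii)$ force it to be negative definite is a routine check of diagonal dominance or Sylvester's criterion, so I would defer that to the end.
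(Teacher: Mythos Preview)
Your approach is essentially the paper's: the same Volterra-type Lyapunov function, the same application of Lemma~\ref{one}, and the same fractional LaSalle conclusion. The only substantive discrepancy is in the weights. Your tentative guess $\gamma_1 = 1/c$, $\gamma_2 = \alpha$ (mimicking the linear combination in the set $A$) is not what the paper uses; the correct choices are
\[
\gamma_1 = \frac{a+x^*}{ac}, \qquad \gamma_2 = y^* + r,
\]
and with these the cross terms are absorbed completely, so the final estimate is purely diagonal:
\[
{}^{c}_{0}D^{m}_{t}V \le \Bigl[\tfrac{y^*}{a(a+x^*)}-1\Bigr](x-x^*)^2 + \Bigl[\cdots\Bigr](y-y^*)^2 + \tfrac{1}{2}\Bigl[\cdots\Bigr](z-z^*)^2,
\]
where the three bracketed coefficients are exactly the left-hand sides of hypotheses $(i)$--$(iii)$. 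Hence no Sylvester-criterion or diagonal-dominance argument is needed at the end; the conditions are simply the signs of three scalars. You already anticipated that the factor $\frac{a+x^*}{ac}$ should emerge, so you would likely have discovered the right $\gamma_1$ once the cross-term cancellation forced your hand; just be aware that the second weight $y^*+r$ (not $\alpha$) is what makes the $z$-equation contribution collapse, via the identity $p(y^*+r)=q$.
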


\begin{proof} Let us consider the Lyapunov function
\begin{equation}\nonumber
V(x,y,z) = \bigg(x - x^* - x^*ln\frac{x}{x^*}\bigg) + \frac{a+x^*}{ac}\bigg(y - y^* - y^*ln\frac{y}{y^*}\bigg) + (y^* + r)\bigg(z - z^* - z^*ln\frac{z}{z^*}\bigg).
\end{equation}
It is easy to see that $V = 0$ only at $(x, y, z) = (x^*, y^*, z^*)$ and $V > 0$ whenever $(x, y, z) \neq (x^*, y^*, z^*)$. Considering the $m-th$ order fractional derivative of $V(x,y,z)$ along the solutions of (\ref{Tritophic fractional order model_2}), we have
\begin{eqnarray}
\begin{aligned}
^c_{0}D^{m}_{t}V(x,y,z) = & {^c_{0}}D^{m}_{t}\bigg(x - x^* - x^*ln\frac{x}{x^*}\bigg) + \frac{a+x^*}{ac} {^c_{0}}D^{m}_{t}\bigg(y - y^* - y^*ln\frac{y}{y^*}\bigg) \\
& + (y^* + r){^c_{0}}D^{m}_{t}\bigg(z - z^* - z^*ln\frac{z}{z^*}\bigg). \\
 \end{aligned}
\end{eqnarray}
Using Lemma \ref{one}, we have
\begin{eqnarray}\label{Globality_2}\nonumber
\begin{aligned}
^c_{0}D^{m}_{t}V(x,y,z) \leq & \frac{(x - x^*)}{x} {^c_{0}}D^{m}_{t}x(t) + \frac{a+x^*}{ac} \frac{(y - y^*)}{y} {^c_{0}}D^{m}_{t}y(t) \\
& + (y^* + r)\frac{(z - z^*)}{z} {^c_{0}}D^{m}_{t}z(t).
\end{aligned}
\end{eqnarray}
Following \cite{LET16,MET17a}, one can easily prove
\begin{eqnarray}\nonumber
\begin{aligned}
^c_{0}D^{m}_{t}V(x,y,z)
\leq & \bigg[\frac{y^*}{a(a+x^*)} - 1\bigg] (x - x^*)^{2} + \bigg[\frac{a +x^*}{ac}\bigg(\frac{z^*}{d(d + y^*)} - \frac{1}{2(c + \frac{c}{4b} + d)}\bigg) \\
& +   \frac{q}{2br\alpha}\bigg] (y-y^*)^2 + \frac{1}{2}\bigg[\frac{q}{br\alpha} - \frac{a + x^*}{ac(c + \frac{c}{4b} + d)}\bigg](z-z^*)^2\\
\leq & 0, \forall (x,y,z)\in \Re^{3}_{+}
\end{aligned}
\end{eqnarray}
if the following conditions hold:
\begin{eqnarray}\nonumber
\begin{aligned}
\frac{y^*}{a(a + x^*)}- 1 &< 0,\\
\frac{a +x^*}{ac}\bigg(\frac{z^*}{d(d + y^*)} - \frac{1}{2(c + \frac{c}{4b} + d)}\bigg) +   \frac{q}{2br\alpha} &< 0,\\
\frac{q}{br\alpha} - \frac{a + x^*}{ac(c + \frac{c}{4b} + d)} &< 0.
\end{aligned}
\end{eqnarray}
Here $^c_{0}D^{m}_{t}V(x,y,z) = 0$ implies that $(x,y,z) = (x^*, y^*, z^*)$. Therefore, the only invariant set on which $^c_{0}D^{m}_{t}V(x,y,z) = 0$ is the singleton set $\{E^*\}$. Then, using Lemma $4.6$ in \cite{HET15}, it follows that the interior equilibrium $E^*$ is global asymptotically stable for any $m \in (0,1]$ if conditions of Theorem \ref{thethirdone} are satisfied. Hence the theorem is proven.\\
\end{proof}

\begin{remark} This global stability result is independent of fractional order $m$ and it is also true for integer order $(m = 1)$.
	\end{remark}

\section{Numerical Simulations}
In this section, we perform extensive numerical computations of the fractional order system (\ref{Tritophic fractional order model}) for different fractional values of $m$ $(0 < m < 1)$ and also for $m=1$. We use Adams-type predictor corrector method (PECE) for the numerical solution of system (\ref{Tritophic fractional order model}). It is an effective method to give numerical solutions of both linear and nonlinear FODE \cite{Diethelm02, Diethelm04}. We first replace our system (\ref{Tritophic fractional order model}) by the following equivalent fractional integral equations:
\begin{eqnarray}\label{Tritophic fractional integral eqn} \nonumber
X(T) & = & X(0) + D^{-m}_{T} [a_0 X - b_0 X^2 - \frac{v_0 XY}{d_0 + X}], \nonumber\\
Y(T) & = & Y(0) + D^{-m}_{T} [-a_1 Y + \frac{v_1 XY}{d_1 + X} - \frac{v_2 YZ}{d_2 + Y}], \\
Z(T) & = & Z(0) + D^{-m}_{T} [c_3 Z^2 - \frac{v_3 Z^2}{d_3 + Y}]. \nonumber
\end{eqnarray}
and then apply the PECE (Predict, Evaluate, Correct, Evaluate) method.\\

Several examples are presented to illustrate the analytical results obtained in the previous section. To explore the effect of fractional order on the system dynamics, we varied $m$ in its range $0< m<1$. We also plotted the solutions for $m=1$, whenever necessary, to compare the solutions of fractional order system with that of integer order. It is to be mentioned that we first rescale all conditions and then verify different stability conditions of the system (\ref{Tritophic fractional order model}).\\

\noindent \textbf{Example 1:}
We considered the parameter values as $v_0 = 1.0$, $d_0 = d_1 = d_2 = 10.0$, $a_1 = 1.0$, $v_1 = 2.0$, $v_2 = 0.405$, $v_3 = 1.0$, $c_3 = 0.038$, $d_3 = 20.0$ and initial point $X(0) = 1.2, Y(0) = 1.2, Z(0) = 1.2$ from Aziz-Alaoui \cite{Aziz02} except $b_0 = 0.075$. Step size for all simulations is considered as $0.05$. This parameter set satisfies the positivity conditions of $E^*$, viz., $v_3>d_3c_3$, $v_1>a_1$ and $a_0 > max \{0.7500, 0.6265, 1.0658\}$. Thus we choose $a_0 = 1.2$ and compute $D(F) = 0.00018925 > 0$, $ A_{1} = 0.6007 >0$, $A_{3} =  0.0016>0$, $A_{1}A_{2}- A_{3} = 0.0465>0$. Therefore, following Theorem 2 (i), the interior equilibrium $E^{*} = (12.2081, 6.3158, 4.0056)$ of (\ref{Tritophic fractional order model}) is locally asymptotically stable for $0 < m \leq 1$. Fig. 1 represents the behavior of solutions of FDE system (\ref{Tritophic fractional order model}) for different values of $m$, depicting the stability of interior equilibrium point $E^{*}$. It is noticeable that solutions reach to equilibrium value more slowly as the value of $m$ becomes smaller.\\

\noindent \textbf{Example 2:} If we consider $b_0 = 0.06$, as in \cite{Aziz02}, leaving other parameter values unchanged, then $E^*$ exists if $a_0 > max \{0.6000, 0.6312, 0.9158\}$. Selecting $a_0 = 0.95$, we observe that the conditions of Theorem 2 (ii) are satisfied with $D(F) = -0.0100 <0$, $ A_{1} =0.4988>0$, $A_{2} =  0.1611 > 0, A_3 = 0.00028 > 0$. Therefore, the interior equilibrium point $E^{*} = (10.7638, 6.3158, 1.4819)$ of (\ref{Tritophic fractional order model}) is locally asymptotically stable for $0<m < \frac{2}{3}$ as shown in Fig. 2.\\

\begin{figure}[H]
	\centering
	%\hspace{1.5cm}
	\includegraphics[width=9in, height=2.5in]{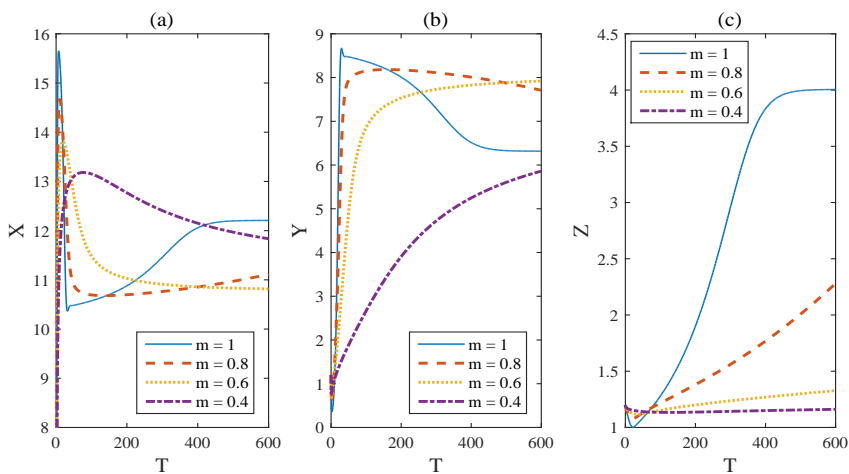}
	\caption{Asymptotically stable solutions of $X$, $Y$ and $Z$ populations with different fractional orders $ 0< m < 1$ and integer order $m = 1$. Here $b_0 = 0.075$, $v_0 = 1.0$, $d_0 = d_1 = d_2 = 10.0$, $a_1 = 1.0$, $v_1 = 2.0$, $v_2 = 0.405$, $v_3 = 1.0$, $c_3 = 0.038$, $d_3 = 20.0$ and $a_0 = 1.2$.}
\end{figure}

\begin{figure}[H]
	\centering
	%\hspace{1.5cm}
	\includegraphics[width=9in, height=2.5in]{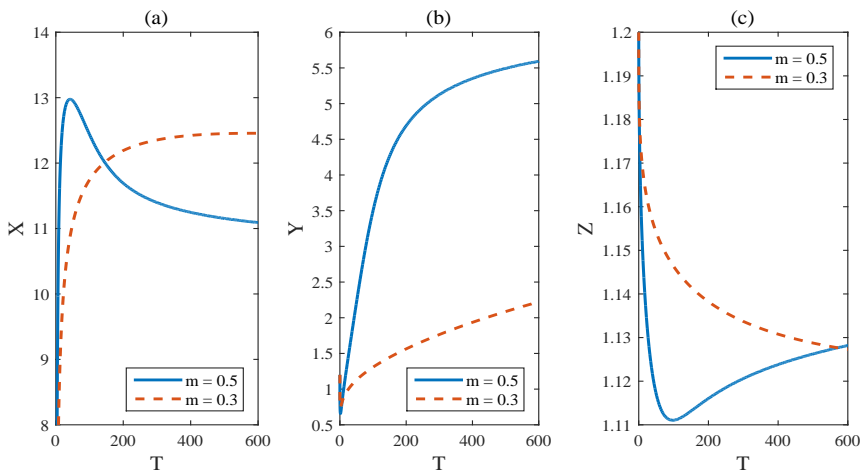}
	\caption{Asymptotically stable solutions of $X$, $Y$ and $Z$ populations with different fractional orders $ 0< m < \frac{2}{3}$. Parameters are as in Example 1 except $b_0 = 0.06$ and $a_0 = 0.95$.}
\end{figure}

\noindent \textbf{Example 3:} If we consider $v_1 = 10, v_2 = 2.5$, keeping other parameter values unchanged as in Example 1, then $E^*$ exists if $a_0 > max \{0.7500, 0.6265, 0.6518\}$. We then choose $a_0 = 1.5$ and verify that all the conditions of Theorem 2 (iii) are satisfied with $D(F) = -7.4129 <0$, $ A_{1} = -0.6171< 0$, $A_{2} =  -0.0335<0$. Therefore, the interior equilibrium point $E^{*} = (16.8655, 6.3158, 34.4443)$ of (\ref{Tritophic fractional order model}) is unstable for  $m > \frac{2}{3}$ (Fig. 3).
\begin{figure}[H]
	\centering
	%\hspace{1.5cm}
	\includegraphics[width=9in, height=3.5in]{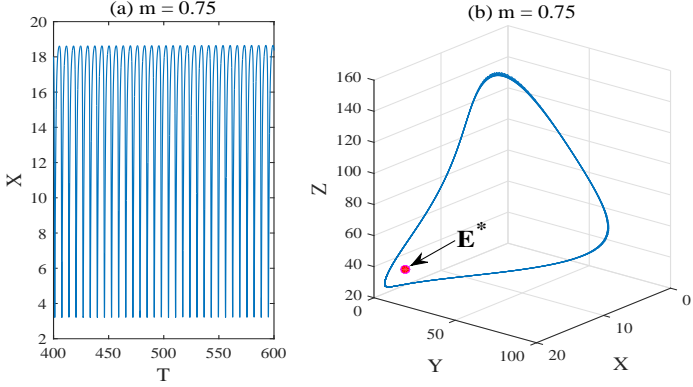}
	\vspace{-2cm}
	\caption{Unstable time evolution of $X$ population (Fig. 3a) and corresponding phase plane (Fig. 3b). Here $m = 0.75(>\frac{2}{3})$, $v_1 = 10, v_2 = 2.5, a_0 = 1.5$ and other parameters are as in Fig. $1$.}
\end{figure}

\noindent \textbf{Example 4:} To demonstrate the global stability of the interior equilibrium point $E^*$, we consider the parameter values $b_0 = 0.15$, $v_0 = 1.0$, $d_0 = d_1 = d_2 = 10.0$, $a_1 = 1.0$, $v_1 = 2.0$, $v_2 = 2.5$, $v_3 = 1.0$, $c_3 = 0.038$, $d_3 = 20.0$ and different initial points $(1.2, 1.2, 1.2)$, $(10.1,30.1,3)$, $(30,10,5)$, $(25,5,1)$, $(22,5,4)$, $(18,15,8)$, $(12,20,2)$, $(5,30,6)$. In this case, $E^*$ exists if $a_0 > max \{1.5000, 0.4467, 1.8158\}$ and so we consider $a_0 = 2.0$. With these parameter values, we verify that all conditions of Theorem $3$ are satisfied as $\frac{y^*}{a(a + x^*)} -1 = -0.8029 < 0$,$\frac{a +x^*}{ac}\bigg(\frac{z^*}{d(d + y^*)} - \frac{1}{2(c + \frac{c}{4b} + d)}\bigg) +   \frac{q}{2br\alpha}  = -0.2804 < 0$, $\frac{q}{br\alpha} - \frac{a + x^*}{ac(c + \frac{c}{4b} + d)}  = - 0.9242 < 0$, where $\alpha = \frac{1}{b^2 (c +\frac{c}{4b} +r)} = 2.1333>0$.  Fig. 4 demonstrates that solutions starting from different initial values converge to the equilibrium point $E^{*} = (11.3623, 6.3158, 0.4162)$ of (\ref{Tritophic fractional order model}) for different fractional orders, $m = 0.65, 0.75, 0.85$, and also for the integer order, $m = 1$, depicting the global stability of the interior equilibrium point for fractional order as well as integer order.

\begin{figure}[H]
	\centering
	%\hspace{2.5cm}
	\includegraphics[width=9in, height=3in]{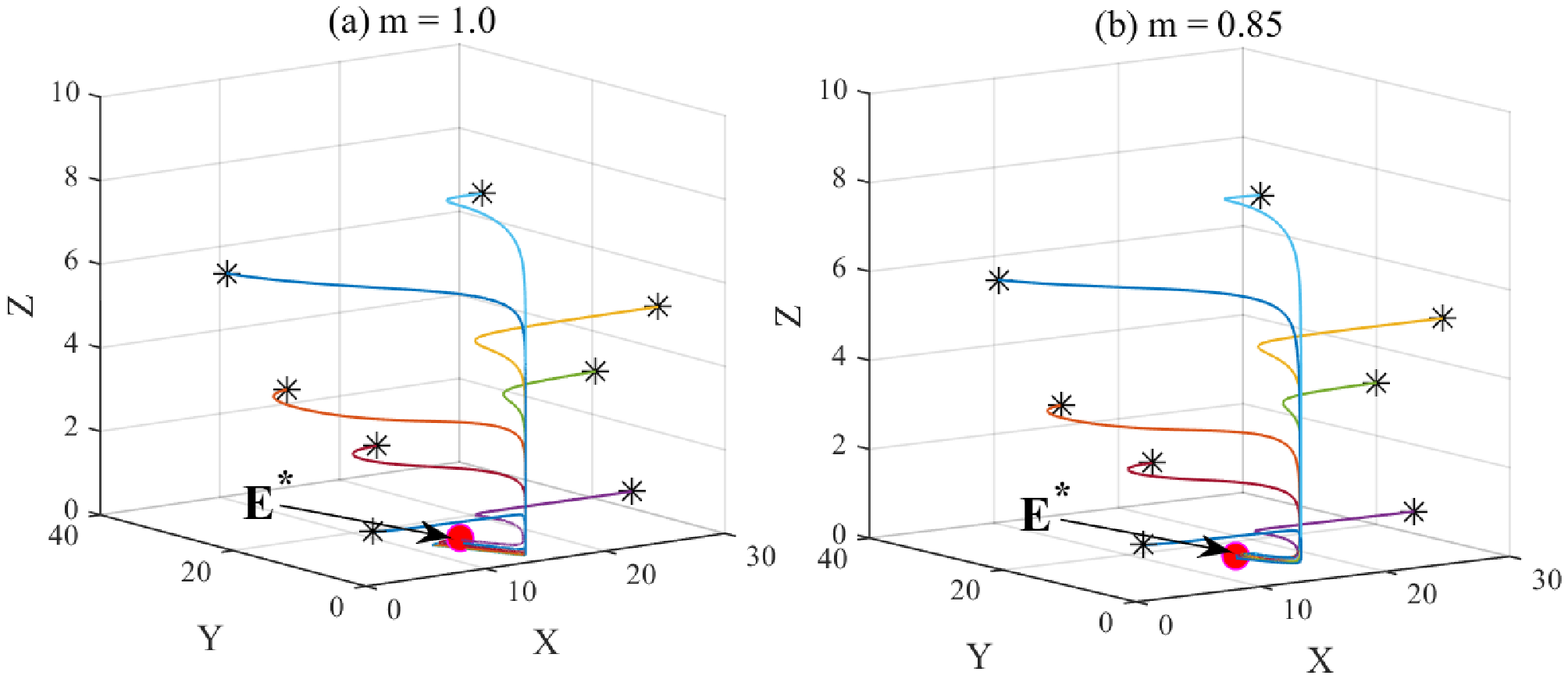}
\end{figure}
\begin{figure}[H]
	\vspace{-1.5cm}
	\centering
	\includegraphics[width=9in, height=3in]{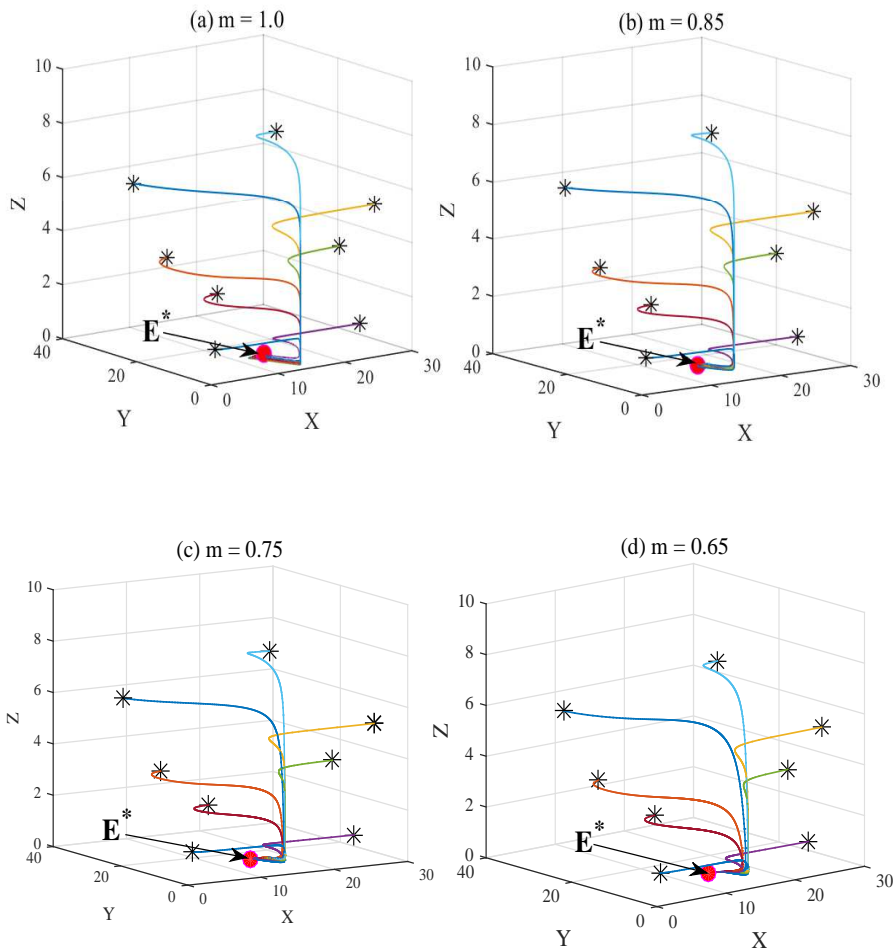}
	\vspace{-1.5cm}
	\caption{Trajectories with different initial values converge to the interior equilibrium point $E^*$ for different values of $m$, indicating global stability of the equilibrium $E^*$ when conditions of Theorem 3 are satisfied. All parameters are as in Fig. 1 except $a_0 = 2.0$, $b_0 = 0.15$ and $v_2 = 2.5$. Initial values are indicated with stars and equilibrium point is denoted by red circle.}
\end{figure}

\noindent \textbf{Example 5:} Here we consider the exact parameter set and initial value as in Alidousti and Ghahfarokhi \cite{Alidousti18} and reproduce their bifurcation diagrams (Figs. 5a and 5b) with respect to the same growth rate parameter of prey (here it is $a_0$) in the same range $[1.6, 2.1]$ for the orders $m=1$ and $m=0.97$. As shown in \cite{Aziz02,Alidousti18}, the system (\ref{Tritophic fractional order model}) exhibits complex chaotic dynamics through period-doubling bifurcation. The first period-doubling bifurcation occurs at $a_0 \approx 1.66 $ for the integer order $m = 1.0$~ (Fig. 5a) and it occurs (Fig. 5b) at $a_0 \approx 1.69$ for the fractional order $m = 0.97$ \cite{Alidousti18}. If we consider our global parameter set of Example $4$ with the same initial values as in \cite{Alidousti18} and draw similar bifurcations (Figs. 5c and 5d) then no bifurcation and complex dynamics is observed because our equilibrium point is globally stable for both the integer and fractional orders. 

%For system (\ref{Tritophic fractional order model}), we next choose (\textcolor{red}{the parameter set as in example $2$ from Alidousti.et al \cite{Alidousti18} with $X(0) = 10.1, Y (0) = 30.1, Z(0) = 1$}). In this case, different behaviors of system (\ref{Tritophic fractional order model}) can be observed by varying $a_0$ in the range $[1.2, 2.1]$ with respect to fractional order $m$, as depicted in Figs. 5. With the increase of $a_0$, we observe that the system (\ref{Tritophic fractional order model}) experiences period-doubling bifurcation, period-halving bifurcation  and chaotic band. A period-doubling bifurcation occurs when $a_0 \approx 1.66 $ for the integer order $m = 1.0$ (see Fig. 5(a)) and when $a_0 \approx 1.69$ for the fractional order $m = 0.97$ (see Fig. 5(b)). 
%%Our system also experiences chaotic behavior when $a_0>2$ for integer order  $m = 1.0$ (see Fig. 7(b)) and  for fractional order  $m = 1.0$ (see Fig. 7(d)). 
%Here we also consider our global parameter set for the interior equilibrium point as in example $4$ with the same initial values as above. We observe that the bifurcation disappears at this range of $a_0$ for both the integer order and fractional order and also the solution becomes stable (see Figs. 5(c) and (d)). As $a_0$ increases further, the evidence for periodic solutions leading to chaos with twice period doubling bifurcations and period-halving bifurcations can be observed.
%(see Figs. 8(b) and (d)).
\begin{figure}[H]
	%\centering
	\hspace{-.5cm}
	\includegraphics[width=5in, height=2.2in]{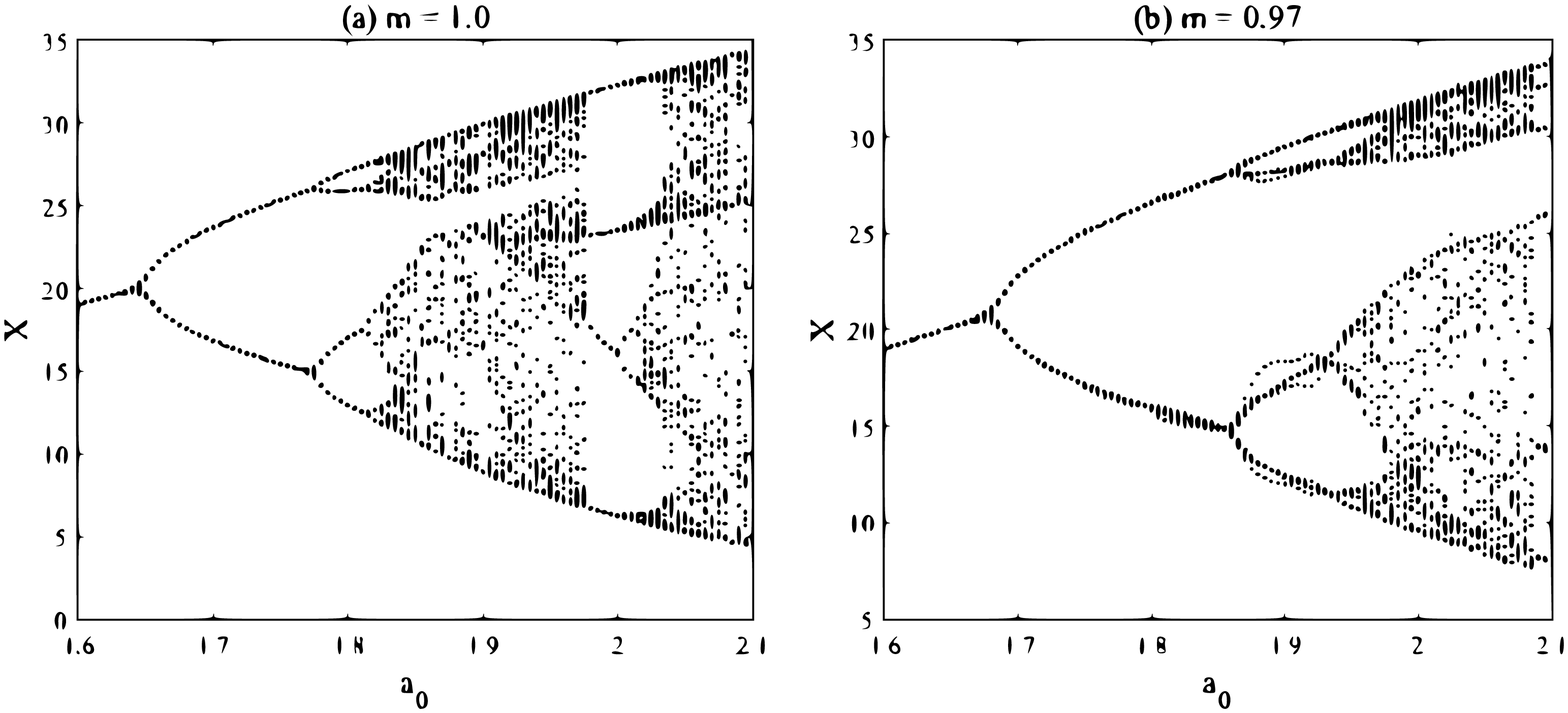}
\end{figure}
%\begin{figure}[H]
%	\vspace{-1cm}
%	\hspace{.05cm}
%	\includegraphics[width=10.5in, height=3.5in]{Fig_10.jpg}
%\end{figure}
\begin{figure}[H]
	\vspace{1cm}
	\hspace{-.6cm}
	\includegraphics[width=5in, height=2.2in]{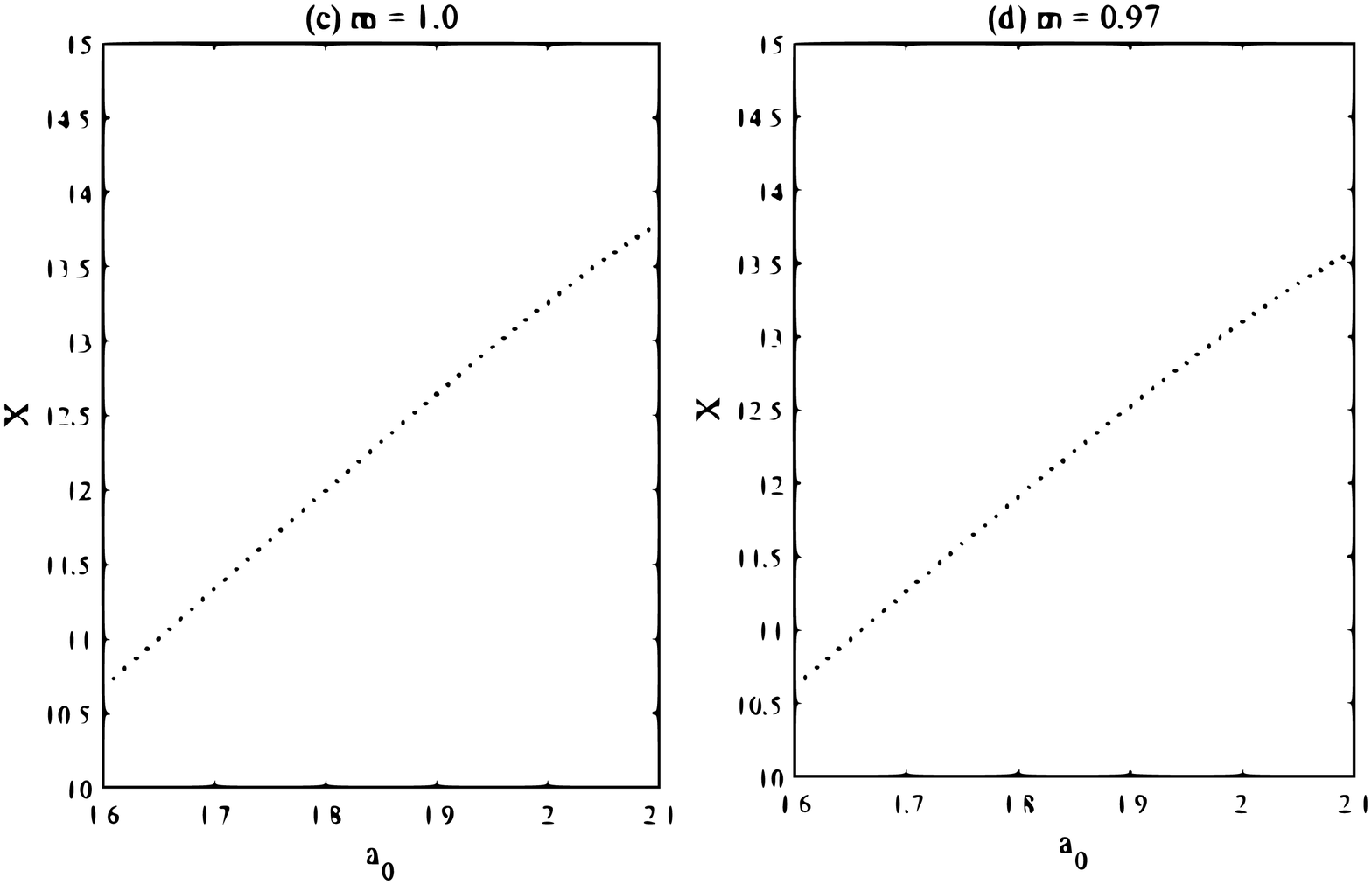}
	%\vspace{-2cm}
	\caption{Bifurcation diagrams of $X$ population of system (\ref{Tritophic fractional order model}) as shown in \cite{Aziz02,Alidousti18} in the range $[1.6, 2.1]$ with $a_0$ as the bifurcation parameter. System becomes unstable for integer order $m = 1.0$ at $a_0 \approx 1.66$ (Fig. 5a)) and then becomes chaotic for higher value of $a_0$. Similar dynamics is also observed with first bifurcation at $a_0 \approx 1.69$ (Fig. 5b) for fractional order $m = 0.97$. All parameters are as in Example 2. Similar bifurcation diagrams (Figs. 5c, 5d)  with the global parameter set of Example 4 show that there is no bifurcation with respect to the parameter $a_0$, indicating stability of the system.}
\end{figure}

\section{Summary}
In this paper, we extended the works of Alidousti and Ghahfarokhi \cite{Alidousti18} on fractional-order three-species food chain model and Aziz-Alaoui \cite{Aziz02} on corresponding integer order model by giving proof of local and global stability of the interior equilibrium point. For local stability we used Routh-Hurwitz criterion for fractional order differential equations. We defined suitable Lyapunov function to prove that the interior equilibrium is globally asymptotically stable if the system parameters satisfy some conditions. In such a case, the system does not show any complicated dynamics like chaos as shown in the earlier studies \cite{Aziz02,Alidousti18}, indicating its global stability. This is more reinforced by the fact that solutions initiating from biologically feasible arbitrary initial points converge to the interior equilibrium point.\\

%for the model is proposed and global stability of the interior equilibrium point is established for both classical integer order and fractional order case. Simulation results also agree perfectly with the analytical results. By using numerical simulations, we have shown that the fractional order model could exhibit chaotic dynamics for realistic parameter. Transition to chaos is established via period-doubling bifurcation, period-halving bifurcation and chaotic band. Therefore, the rich and complex dynamical behaviors of system (\ref{Tritophic fractional order model}) have been observed for both integer order and fractional order systems. In all bifurcation diagrams, we tried to compare both integer order and fractional order systems. We also observed that bifurcations disappear when the parameters are satisfied the global conditions for both the systems. \\

%%%%%%%% Bibliography %%%%%%%%%%%%%%%%%%%%%%%%%%%%%%%%%%%%%%%%%%%%%%%%%


\begin{thebibliography}{20}
	

\bibitem{SET93}
        {\sc  S. Samko, A. Kilbas, O. Marichev}, 
        {\it Fractional Integrals and Derivatives}, 
        Theory and Applications, Gordon and Breach, Yverdon, 1993.
       
        
\bibitem{TET15} 
        {\sc M. C. Tripathy, D. Mondal, K. Biswas, S. Sen},
        {\it Experimental studies on realization of fractional inductors and fractional order bandpass filters},
        Int. J. Circuit Theory and Applications. {\bf 43}, 9 (2015), 1183--1196.
        
\bibitem{BET17}
        {\sc A. Boukhouima, K. Hattaf, N. Yousfi},
        {\it Dynamics of a fractional order HIV infection model with specific functional response and cure rate},
        Int. J. Diff. Equations. {\bf 2017}, (2017).
        
        
\bibitem{TB84} 
        {\sc P. J. Torvik, R. L. Bagley},
        {\it On the appearance of the fractional derivative in the behaviour of real materials}, J. Appl. Mechanics. {\bf 51}, 2 (1984), 294--298.
        
\bibitem{SET11} 
        {\sc J. A. Sabatier, O. P. Agrawal, J. T. Machado},
        {\it  Advances in fractional calculus},
        Dordrecht: Springer, 2007.
        
\bibitem{MET11} 
        {\sc J. T. Machado, V. Kiryakova, F. Mainardi},
        {\it  Recent history of fractional calculus},
        Commun. Nonlinear Sci. Numer. Simulat. {\bf 16}, (2011), 1140--1153.
        
\bibitem{GET10} 
        {\sc R. E. Gutierrez, J. M. Rosario, J. T. Machado},
        {\it Fractional Order Calculus: Basic Concepts and Engineering Applications},
         Math. Prob. Eng. {\it doi:10.1155/2010/375858}, 2010.
        
        
\bibitem{AET12} 
        {\sc S. Abbas, M. Benchohra, G. M. N'Guerekata},
        {\it Topics in fractional differential equations},
        Springer Science \& Business Media. {\bf 27}, (2012).
        
\bibitem{D11} 
        {\sc S. Das},
        {\it Introduction to fractional calculus for scientists and engineers},
         Springer, 2011.
        
\bibitem{M06} 
        {\sc M. L. Richard},
        {\it  Fractional calculus in bioengineering},
         Redding: Begell House, 2006.
        
\bibitem{Ahmed07} 
        {\sc E. Ahmed, A. M. A. El-Sayed, H. A. A. El-Saka},
        {\it Equilibrium points, stability and numerical solutions of fractional-order predator-prey and rabies models},
        J. Math. Anal. Appl. {\bf 325}, (2007), 542--553.
        
\bibitem{RET13} 
        {\sc S. Ranaa, S. Bhattacharyaa, J. Pal, G. M. N'Guerekata, J. Chattopadhyay},
        {\it Paradox of enrichment: A fractional differential approach with memory},
         Physica A. {\bf 392}, (2013), 3610--3621.
        
\bibitem{CY14} 
        {\sc Z. Cui, Z. Yang},
        {\it Homotopy perturbation method applied to the solution of fractional lotka-volterra equations with variable coefficients},
        J. Mod. Meth. Numer. Math. {\bf 5}, (2014), 1--9.
        
\bibitem{MET17} 
        {\sc S. Mondal, N. Bairagi, A. Lahiri},
        {\it A fractional calculus approach to Rosenzweig-MacArthur predator-prey model and its solution},
        J. Mod. Meth. Numer. Math. {\bf 8}, 1-2 (2017), 66--76.
        
\bibitem{LET16} 
        {\sc H. L. Li, L. Zhang, C. Hu, Y. L. Jiang, Z. Teng},
        {\it  Dynamical analysis of a fractional-order predator-prey model incorporating a prey refuge},
        J. Appl. Math. Comput. {\it DOI: 10.1007/s12190-016-1017-8}, (2016).
        
\bibitem{V15} 
        {\sc C. Vargas-De-Leon},
        {\it  Volterra-type Lyapunov functions for fractional-order epidemic systems},
         Commun. Nonlinear Sci. Numer. Simul. {\bf 24}, (2015), 75--85.
        
\bibitem{HET15} 
        {\sc J. Huo, H. Zhao, L. Zhu},
        {\it  The effect of vaccines on backward bifurcation in a fractional order HIV model},
         Nonlinear Anal. RWA. {\bf 26}, (2015), 289--305.
        
\bibitem{MET17a} 
        {\sc S. Mondal, A. Lahiri, N. Bairagi},
        {\it Analysis of a fractional order eco-epidemiological model with prey infection and type 2 functional response},
        Math. Meth. Appl. Sci. {\it DOI: 10.1002/mma.4490}, (2017) 1--14.
          
        
\bibitem{Aziz02} 
        {\sc M. A. Aziz-Alaoui},
        {\it  Study of a Leslie-Gower type titrophic population model},
         Chaos Solitons and Fractals. {\bf 14}, (2002), 1275--1293.
        
\bibitem{Alidousti18} 
        {\sc J. Alidousti, M. M. Ghahfarokhi},
        {\it Dynamical behavior of a fractional three-species food chain model},
         Nonlinear Dyn. {\it doi.org/10.1007/s11071-018-4663-6}, (2018).
        
  
\bibitem{Ahmed06} 
        {\sc E. Ahmed, A. M. A. El-Sayed, H. A. A. El-Saka},
        {\it On some Routh-Hurwitz conditions for fractional order differential equations and their applications in Lorenz, Rossler, Chua and Chen systems},
        Physics Letters A. {\bf 358}, (2006), 1--4.    
        
        
\bibitem{Bairagi17} 
        {\sc S. Mondal, N. Bairagi, A. Lahiri},
        {\it  Analysis of a fractional order eco-epidemiological model with prey infection and type 2 functional response},
         Math. Meth. Appl. Sci. {\bf 40}, 18 (2017), 6776--6789.
              
\bibitem{Diethelm02} 
        {\sc K. Diethelm, N. J. Ford, A. D. Freed},
        {\it  A predictor corrector approach for the numerical solution of fractional differential equations},
        2002.
        
\bibitem{Diethelm04} 
        {\sc K. Diethelm, N. J. Ford, A. D. Freed},
        {\it  Detailed error analysis for a fractional Adams method},
        Numerical Algorithms. {\bf 36}, (2004), 31--52.
       

\end{thebibliography}
\end{document}